\newcommand{\dt}{{\rm d}t}
\newcommand{\du}{{\rm d}u}
\newcommand{\ds}{{\rm d}s}
\newcommand{\doo}{{\rm d}}
\newtheorem{theorem}{Theorem}
\title{Mills' ratio: Reciprocal convexity and functional inequalities}
\author{\'Arp\'ad Baricz}
\address{Department of Economics, Babe\c{s}-Bolyai University, Cluj-Napoca 400591,
Romania} \email{bariczocsi@yahoo.com}
\keywords{Mills' ratio, Reciprocally convex (concave) functions,
Monotone form of l'Hospital's rule, Statistical distributions,
Completely monotonic functions, Stieltjes transform.}
\begin{document}
\maketitle

\begin{center}
\footnotesize{Dedicated to my children Bor\'oka and Kopp\'any}
\end{center}

\begin{abstract}
This note contains sufficient conditions for the probability density
function of an arbitrary continuous univariate distribution,
supported on $(0,\infty),$ such that the corresponding Mills ratio
to be reciprocally convex (concave). To illustrate the applications
of the main results, the reciprocal convexity (concavity) of Mills
ratio of the gamma distribution is discussed in details.
\end{abstract}

\section{\bf Introduction}

By definition (see \cite{merkle}) a function
$f:(0,\infty)\to\mathbb{R}$ is said to be (strictly) reciprocally
convex if $x\mapsto f(x)$ is (strictly) concave and $x\mapsto
f(1/x)$ is (strictly) convex on $(0,\infty).$ Merkle \cite{merkle}
showed that $f$ is reciprocally convex if and only if for all
$x,y>0$ we have
\begin{equation}\label{eq1}f\left(\frac{2xy}{x+y}\right)\leq\frac{f(x)+f(y)}{2}\leq
f\left(\frac{x+y}{2}\right)\leq
\frac{xf(x)+yf(y)}{x+y}.\end{equation} We note here that in fact the
third inequality follows from the fact that the function $x\mapsto
f(1/x)$ is convex on $(0,\infty)$ if and only if $x\mapsto xf(x)$ is
convex on $(0,\infty).$ In what follows, similarly as in
\cite{merkle}, a function $g:(0,\infty)\to\mathbb{R}$ is said to be
(strictly) reciprocally concave if and only if $-g$ is (strictly)
reciprocally convex, i.e. if $x\mapsto g(x)$ is (strictly) convex
and $x\mapsto g(1/x)$ is (strictly) concave on $(0,\infty).$ Observe
that if $f$ is differentiable, then $x\mapsto f(1/x)$ is (strictly)
convex (concave) on $(0,\infty)$ if and only if $x\mapsto x^2f'(x)$
is (strictly) increasing (decreasing) on $(0,\infty).$

As it was shown by Merkle \cite{merkle}, reciprocally convex
functions defined on $(0,\infty)$ have a number of interesting
properties: they are increasing on $(0,\infty)$ or have a constant
value on $(0,\infty),$ they have a continuous derivative on
$(0,\infty)$ and they generate a sequence of quasi-arithmetic means,
with the first one between harmonic and arithmetic mean and others
above the arithmetic mean. Some examples of reciprocally convex
functions related to the Euler gamma function were given in
\cite{merkle}.

By definition (see \cite{widder}) a function
$f:(0,\infty)\to\mathbb{R}$ is said to be completely monotonic, if
$f$ has derivatives of all orders and satisfies
$$(-1)^nf^{(n)}(x)\geq 0$$ for all $x>0$ and
$n\in\{0,1,\dots\}.$ Note that strict inequality always holds above
unless $f$ is constant. It is known (Bernstein's Theorem) that $f$
is completely monotonic if and only if \cite[p. 161]{widder}
$$f(x)=\int_0^{\infty}e^{-xt}\doo\nu(t),$$
where $\nu$ is a nonnegative measure on $[0,\infty)$ such that the
integral converges for all $x>0.$ An important subclass of
completely monotonic functions consists of the Stieltjes transforms
defined as the class of functions $g:(0,\infty)\to\mathbb{R}$ of the
form
$$g(x)=\alpha+\int_0^{\infty}\frac{\doo\nu(t)}{x+t},$$
where $\alpha\geq0$ and $\nu$ is a nonnegative measure on
$[0,\infty)$ such that the integral converges for all $x>0.$

It was pointed out in \cite{merkle} that if a function
$h:(0,\infty)\to\mathbb{R}$ is a Stieltjes transform, then $-h$ is
reciprocally convex, i.e. $h$ is reciprocally concave. We note that
some known reciprocally concave functions comes from probability
theory. For example, the Mills ratio of the standard normal
distribution is a reciprocally concave function. For this let us see
some basics. The probability density function
$\varphi:\mathbb{R}\rightarrow(0,\infty),$ the cumulative
distribution function $\Phi:\mathbb{R}\to (0,1)$ and the reliability
function $\overline{\Phi}:\mathbb{R}\rightarrow(0,1)$ of the
standard normal law, are defined by
$$\varphi(x)=\frac{1}{\sqrt{2\pi}}e^{-x^2/2},$$
$$\Phi(x)=\int_{-\infty}^x\varphi(t)\dt$$
and $$\overline{\Phi}(x)=1-\Phi(x)=\int_{x}^{\infty}\varphi(t)\dt.$$
The function $m:\mathbb{R}\rightarrow(0,\infty),$ defined by
$$m(x)=\frac{\overline{\Phi}(x)}{\varphi(x)}=e^{x^2/2}\int_x^{\infty}e^{-t^2/2}\dt=\int_0^{\infty}e^{-xt}e^{-t^2/2}\dt,$$
is known in literature as Mills' ratio \cite[Sect. 2.26]{mitri} of
the standard normal law, while its reciprocal $r=1/m,$ defined by
$r(x)=1/m(x)=\varphi(x)/\overline{\Phi}(x),$ is the so-called
failure (hazard) rate. For Mills' ratio of other distributions, like
gamma distribution, we refer to \cite{mer} and to the references
therein.

It is well-known that Mills' ratio of the standard normal
distribution is convex and strictly decreasing on $\mathbb{R},$ at
the origin takes on the value $m(0)=\sqrt{\pi/2}.$ Moreover, it can
be shown (see \cite{mills2}) that $x\mapsto m'(x)/m(x)$ is strictly
increasing and $x\mapsto x^2m'(x)$ is strictly decreasing on
$(0,\infty).$ With other words, the Mills ratio of the standard
normal law is strictly reciprocally concave on $(0,\infty).$ Some
other monotonicity properties and interesting functional
inequalities involving the Mills ratio of the standard normal
distribution can be found in \cite{mills2}. The following
complements the above mentioned results.

\begin{theorem}
Let $m$ be the Mills ratio of the standard normal law. Then the
function $x\mapsto m(\sqrt{x})/\sqrt{x}$ is a Stieltjes transform
and consequently it is strictly completely monotonic and strictly
reciprocally concave on $(0,\infty).$ In particular, if $x,y>0,$
then the following chain of inequalities holds
\begin{align*}
\sqrt{\frac{x+y}{2xy}}&m\left(\sqrt{\frac{2xy}{x+y}}\right)\geq\frac{\sqrt{y}m(\sqrt{x})+\sqrt{x}m(\sqrt{y})}{2\sqrt{xy}}
\\&\geq\sqrt{\frac{2}{x+y}}m\left(\sqrt{\frac{x+y}{2}}\right)\geq\frac{\sqrt{x}m(\sqrt{x})+\sqrt{y}m(\sqrt{y})}{x+y}.
\end{align*}
In each of the above inequalities equality holds if and only if
$x=y.$
\end{theorem}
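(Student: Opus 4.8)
The plan is to prove everything by exhibiting $g:(0,\infty)\to(0,\infty)$, $g(x)=m(\sqrt{x})/\sqrt{x}$, explicitly as a Stieltjes transform; the completely monotonic and reciprocally concave assertions, as well as the inequalities, will then follow from the general facts recorded in the introduction. The crux is an elementary integral manipulation. Starting from the definition $m(\sqrt{x})=e^{x/2}\int_{\sqrt{x}}^{\infty}e^{-p^{2}/2}\,\doo p$, the substitution $p=w\sqrt{x}$ turns $g$ into $g(x)=e^{x/2}\int_{1}^{\infty}e^{-xw^{2}/2}\,\doo w$, and the further change of variable $w=\sqrt{2s+1}$ collapses this to the Laplace integral
$$g(x)=\int_0^{\infty}\frac{e^{-xs}}{\sqrt{2s+1}}\,\ds.$$
First I would verify these two substitutions carefully, since they are the heart of the argument; the slowly varying weight $s\mapsto(2s+1)^{-1/2}$ is itself completely monotonic, which already identifies $g$ as the Laplace transform of a completely monotonic function.

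Next I would pass from this Laplace form to the explicit Stieltjes kernel. Writing $(2s+1)^{-1/2}=\pi^{-1/2}\int_0^{\infty}\tau^{-1/2}e^{-(2s+1)\tau}\,\doo\tau$, inserting it above, interchanging the order of integration (legitimate by Tonelli, every integrand being nonnegative), carrying out the inner $s$-integration, and finally scaling $t=2\tau$, I expect to obtain
$$g(x)=\frac{1}{\sqrt{2\pi}}\int_0^{\infty}\frac{e^{-t/2}}{\sqrt{t}}\,\frac{\doo t}{x+t},$$
which is exactly of Stieltjes form with $\alpha=0$ and $\doo\nu(t)=(2\pi)^{-1/2}t^{-1/2}e^{-t/2}\,\doo t$. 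Since this measure is nonnegative and nontrivial, $g$ is a nonconstant Stieltjes transform; as Stieltjes transforms form a subclass of the completely monotonic functions, $g$ is completely monotonic, and strictly so because it is nonconstant. By the property of Stieltjes transforms recalled in the introduction, $-g$ is then (strictly) reciprocally convex, i.e. $g$ is strictly reciprocally concave on $(0,\infty)$.

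Finally, the chain of inequalities is pure specialization. Strict reciprocal concavity of $g$ means that the inequalities in \eqref{eq1} hold with each one reversed and strict unless the two arguments coincide, namely
$$g\!\left(\tfrac{2xy}{x+y}\right)\geq\frac{g(x)+g(y)}{2}\geq g\!\left(\tfrac{x+y}{2}\right)\geq\frac{xg(x)+yg(y)}{x+y}.$$
Substituting $g(x)=m(\sqrt{x})/\sqrt{x}$ and rewriting the four terms over a common radical reproduces the asserted chain term by term, with equality throughout precisely when $x=y$. The only genuine obstacle is the very first step: guessing and then rigorously validating the representation $g(x)=\int_0^{\infty}e^{-xs}(2s+1)^{-1/2}\,\ds$. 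As a cross-check one could instead locate the kernel by Stieltjes inversion, analytically continuing $g$ to the cut plane and computing $\operatorname{Im}g(-t+\mathrm{i}0)=-(\pi/2)^{1/2}t^{-1/2}e^{-t/2}$ from the Gaussian cosine and sine transforms; this recovers the same $\doo\nu$, but the substitution route above is more elementary and self-contained.
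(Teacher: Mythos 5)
Your proof is correct, and it reaches exactly the same destination as the paper -- the representation
$$\frac{m(\sqrt{x})}{\sqrt{x}}=\frac{1}{\sqrt{2\pi}}\int_0^{\infty}\frac{1}{x+t}\,\frac{e^{-t/2}}{\sqrt{t}}\,{\rm d}t,$$
followed by Merkle's theorem that Stieltjes transforms are reciprocally concave and by specialization of the inequality chain \eqref{eq1} -- but it gets there by a different and more self-contained route. The paper simply quotes the Cauchy-type formula $m(x)=2\int_0^{\infty}\frac{x}{x^2+t^2}\varphi(t)\,{\rm d}t$ from Kendall and Stuart and changes variables, whereas you derive the kernel from scratch: two substitutions reduce $g(x)=m(\sqrt{x})/\sqrt{x}$ to the Laplace integral $\int_0^{\infty}e^{-xs}(2s+1)^{-1/2}\,{\rm d}s$, and the identity $(2s+1)^{-1/2}=\pi^{-1/2}\int_0^{\infty}\tau^{-1/2}e^{-(2s+1)\tau}\,{\rm d}\tau$ together with Tonelli converts this into Stieltjes form. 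I have checked all the substitutions and the constants; they are correct. Your intermediate Laplace representation is a genuine bonus: it exhibits $g$ as the Laplace transform of a nonnegative (indeed completely monotonic) density, so strict complete monotonicity follows at once from Bernstein's theorem, which replaces the paper's separate second argument via composition and products of completely monotonic functions. What your route costs is length; what it buys is independence from the tabulated formula in Kendall--Stuart and a one-line proof of the complete monotonicity claim. The final translation of \eqref{eq1} into the stated chain, and the identification of the equality cases with $x=y$ via strictness of the (nontrivial-measure) Stieltjes transform, are handled at the same level of detail as in the paper.
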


\begin{proof}
For $x>0$ the Mills of the standard normal distribution can be
represented as \cite[p. 145]{kendall}
$$m(x)=\int_{-\infty}^{\infty}\frac{x}{x^2+t^2}\varphi(t)\dt=2\int_{0}^{\infty}\frac{x}{x^2+t^2}\varphi(t)\dt.$$
From this we obtain that
$$\frac{m(\sqrt{x})}{\sqrt{x}}=\frac{1}{\sqrt{2\pi}}\int_0^{\infty}\frac{1}{x+s}\frac{e^{-s/2}}{\sqrt{s}}\ds,$$
which shows that the function $x\mapsto m(\sqrt{x})/\sqrt{x}$ is in
fact a Stieltjes transform and owing to Merkle \cite[p. 217]{merkle}
this implies that the function $x\mapsto -m(\sqrt{x})/\sqrt{x}$ is
reciprocally convex on $(0,\infty),$ i.e. the function $x\mapsto
m(\sqrt{x})/\sqrt{x}$ is reciprocally concave on $(0,\infty).$ The
rest of the proof follows easily from \eqref{eq1}. We note that the
strictly complete monotonicity of the function $x\mapsto
m(\sqrt{x})/\sqrt{x}$ can be proved also by using the properties of
completely monotonic functions. Mills ratio $m$ of the standard
normal distribution is in fact a Laplace transform and consequently
it is strictly completely monotonic (see \cite{mills2}). On the
other hand, it is known (see \cite{widder}) that if $u$ is strictly
completely monotonic and $v$ is nonnegative with a strictly
completely monotone derivative, then the composite function $u\circ
v$ is also strictly completely monotonic. Now, since the function
$m$ is strictly completely monotonic on $(0,\infty)$ and $x\mapsto
2(\sqrt{x})'=1/\sqrt{x}$ is strictly completely monotonic on
$(0,\infty),$ we obtain that $x\mapsto m(\sqrt{x})$ is also strictly
completely monotonic on $(0,\infty).$ Finally, by using the fact
that the product of completely monotonic functions is also
completely monotonic, the function $x\mapsto m(\sqrt{x})/\sqrt{x}$
is indeed strictly completely monotonic on $(0,\infty).$
\end{proof}

Now, since the Mills ratio of the standard normal distribution is
reciprocally concave a natural question which arises here is the
following: under which conditions does the Mills ratio of an
arbitrary continuous univariate distribution, having support
$(0,\infty),$ will be reciprocally convex (concave)? The goal of
this paper is to find some sufficient conditions for the probability
density function of an arbitrary continuous univariate distribution,
supported on the semi-infinite interval $(0,\infty),$ such that the
corresponding Mills ratio to be reciprocally convex (concave). The
main result of this paper, namely Theorem \ref{th2} in section 2, is
based on some recent results of the author \cite{mills} and
complement naturally the results from \cite{mills2,mills}. To
illustrate the application of the main result, the Mills ratio of
the gamma distribution is discussed in details in section 3.

We note that although the reciprocal convexity (concavity) of Mills
ratio is interesting in his own right, the convexity of the Mills
ratio of continuous distributions has important applications in
monopoly theory, especially in static pricing problems. For
characterizations of the existence or uniqueness of global
maximizers we refer to \cite{berg} and to the references therein.
Another application can be found in \cite{mart}, where the convexity
of Mills ratio is used to show that the price is a sub-martingale.

\section{\bf Reciprocal convexity (concavity) of Mills ratio}

In this section our aim is to find some sufficient conditions for
the probability density function such that the corresponding Mills
ratio to be reciprocally convex (concave). As in \cite{mills} the
proof is based on the monotone form of l'Hospital's rule \cite[Lemma
2.2]{anderson}.

\begin{theorem}\label{th2}
Let $f:(0,\infty)\to (0,\infty)$ be a probability density function
and let $\omega:(0,\infty)\to\mathbb{R},$ defined by
$\omega(x)=f'(x)/f(x),$ be the logarithmic derivative of $f.$ Let
also $\overline{F}:(0,\infty)\to(0,1),$ defined by
$\overline{F}(x)=\int_x^{\infty}f(t)\dt,$ be the survival function
and $m:(0,\infty)\to(0,\infty),$ defined by
$m(x)=\overline{F}(x)/f(x),$ be the corresponding Mills ratio. Then
the following assertions are true:
\begin{enumerate}
\item[{(a)}] If $f(x)/\omega(x)\to0$ as $x\to\infty,$ $\omega'/\omega^2$ is (strictly) decreasing
(increasing) on $(0,\infty)$ and the function
$$x\mapsto \frac{x^3\omega'(x)}{x\omega^2(x)-x\omega'(x)-2\omega(x)}$$
is (strictly) increasing (decreasing) on $(0,\infty),$ then Mills
ratio $m$ is (strictly) reciprocally convex (concave) on
$(0,\infty).$
\item[{(b)}] If $xf(x)/(1-x\omega(x))\to0,$ $f(x)/\omega(x)\to0$ as $x\to\infty,$ $\omega'/\omega^2$ is (strictly) decreasing
(increasing) on $(0,\infty)$ and the function
$$x\mapsto \frac{x^2\omega'(x)-x\omega(x)+2}{x\omega^2(x)-x\omega'(x)-2\omega(x)}$$
is (strictly) increasing (decreasing) on $(0,\infty)$, then Mills
ratio $m$ is (strictly) reciprocally convex (concave) on
$(0,\infty).$
\end{enumerate}
\end{theorem}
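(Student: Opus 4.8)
The plan is to translate reciprocal convexity (concavity) of $m$ into two monotonicity statements and to attack each with the differential identity satisfied by $m$ together with the monotone form of l'Hospital's rule. First I would record the basic identities: differentiating $m=\overline{F}/f$ and using $\overline{F}'=-f$ and $f'=\omega f$ gives the Riccati-type relation
\[ m'=-1-\omega m, \qquad m''=\omega+(\omega^2-\omega')m. \]
By the criterion recalled in the Introduction, $m$ is reciprocally convex (concave) on $(0,\infty)$ exactly when $m$ is concave (convex) and the map $x\mapsto x^2m'(x)$ is increasing (decreasing); the latter is the same as convexity (concavity) of $x\mapsto xm(x)$, and a short computation using the identities above turns it into
\[ (xm)''=2m'+xm''=(x\omega-2)+m\,D, \qquad D:=x\omega^2-x\omega'-2\omega, \]
or equivalently $(x^2m')'=\tfrac{x}{\omega}\,(x\omega'-Dm')$. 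Thus everything reduces to controlling the signs of these expressions.

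Next I would separate the two required properties. The concavity/convexity of $m$ is governed by the pair of hypotheses common to (a) and (b), namely $f/\omega\to0$ and the monotonicity of $\omega'/\omega^2$; this is exactly the input provided by the author's earlier work \cite{mills}, from which I would deduce that $m$ is concave when $\omega'/\omega^2$ decreases and convex when it increases. (As a check, the standard normal data $\omega=-x$, $\omega'/\omega^2=-1/x^2$ is increasing, and indeed $m$ is convex there.) This leaves only the convexity/concavity of $xm$, i.e. the sign of $(xm)''$, which is the genuinely new content and where the displayed fractions enter.

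To handle $(xm)''$ I would not try to bound $m$ directly but instead write the relevant quantity as a ratio of two functions that both vanish at $+\infty$ and apply the monotone form of l'Hospital's rule \cite[Lemma 2.2]{anderson}, after which the ratio of derivatives collapses---using $m'=-1-\omega m$ to eliminate $m'$, $m''$ and $\overline{F}$---to an explicit expression in $\omega$ alone. In case (a) this expression is $x^3\omega'/D$, and the single limit $f/\omega\to0$ is what guarantees that the chosen numerator and denominator tend to $0$; in case (b) I would use instead the representation $(xm)'=m(1-x\omega)-x$, so that the natural ratio is built from $\overline{F}(1-x\omega)$ and the auxiliary limits $f/(1-x\omega)\to0$ and $xf/(1-x\omega)\to0$ are exactly the endpoint conditions needed, and the derivative-ratio reduces to $(x^2\omega'-x\omega+2)/D$. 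In either case the monotonicity of the displayed fraction yields the monotonicity of the ratio, hence the desired sign of $(xm)''$; combining this with the concavity/convexity of $m$ from the previous step gives that $m$ is reciprocally convex (concave), with strictness throughout corresponding to the strict versions of the hypotheses.

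The hard part will be the bookkeeping rather than any single idea: choosing the ratios so that the ratio of derivatives simplifies to exactly $x^3\omega'/D$ (resp.\ $(x^2\omega'-x\omega+2)/D$), verifying via the stated limits that numerator and denominator vanish at $+\infty$ so that l'Hospital's monotone rule applies, and---most delicately---tracking the signs of $\omega$ and of the common denominator $D=x\omega^2-x\omega'-2\omega$, since these signs are what convert ``fraction increasing/decreasing'' into ``$(xm)''\gtrless0$'' and thereby fix whether one lands in the convex or the concave case.
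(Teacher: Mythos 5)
Your proposal follows essentially the same route as the paper: the reciprocal convexity (concavity) is split into the concavity (convexity) of $m$, handled by the cited result from \cite{mills} under the hypotheses $f/\omega\to0$ and the monotonicity of $\omega'/\omega^2$, and the monotonicity of $x\mapsto x^2m'(x)$ (part (a)) resp.\ the convexity of $x\mapsto xm(x)$ (part (b)), each treated by the monotone form of l'Hospital's rule applied to exactly the ratios you describe, with derivative-quotients collapsing to $x^3\omega'/D$ and $(x^2\omega'-x\omega+2)/D$. The identities $m'=-1-\omega m$ and $(xm)'=m(1-x\omega)-x$ and the role of the limit hypotheses as endpoint conditions all match the paper's argument, so the proposal is correct and not materially different.
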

\begin{proof}
(a) By definition Mills ratio $m$ is (strictly) reciprocally convex
(concave) if $m$ is (strictly) concave (convex) and $x\mapsto
m(1/x)$ is (strictly) convex (concave). It is known (see
\cite[Theorem 2]{mills}) that if $f(x)/\omega(x)$ tends to zero as
$x$ tends to infinity and the function $\omega'/\omega^2$ is
(strictly) increasing (decreasing), then $m$ is (strictly) convex
(concave). Thus, we just need to find conditions for the (strict)
convexity (concavity) of the function $x\mapsto m(1/x).$ This
function is (strictly) convex (concave) on $(0,\infty)$ if and only
if the function $x\mapsto x^2m'(x)$ is (strictly) increasing
(decreasing) on $(0,\infty).$ On the other hand, observe that Mills
ratio $m$ satisfies the differential equation $$m'(x) =
-\omega(x)m(x) - 1.$$ Thus, by using the monotone form of
l'Hospital's rule (see \cite[Lemma 2.2]{anderson}) to prove that the
function \begin{align*}x\mapsto
x^2m'(x)&=-\frac{\left(\overline{F}(x)+f(x)/\omega(x)\right)-\lim\limits_{x\to\infty}\left(\overline{F}(x)+f(x)/\omega(x)\right)}
{f(x)/(x^2\omega(x))-\lim\limits_{x\to\infty}f(x)/(x^2\omega(x))}\\&=-\frac{\overline{F}(x)+f(x)/\omega(x)}{f(x)/(x^2\omega(x))}\end{align*}
is (strictly) increasing (decreasing) on $(0,\infty)$ it is enough
to show that
$$x\mapsto -\frac{\left(\overline{F}(x)+f(x)/\omega(x)\right)'}{\left(f(x)/(x^2\omega(x))\right)'}=\frac{x^3\omega'(x)}{x\omega^2(x)-x\omega'(x)-2\omega(x)}$$
is (strictly) increasing (decreasing) on $(0,\infty).$

(b) Observe that according to \cite[Lemma 2.2]{merkle} the function
$x\mapsto m(1/x)$ is (strictly)  convex (concave) if and only if
$x\mapsto xm(x)$ is (strictly) convex (concave) on $(0,\infty).$
Now, by using the monotone form of l'Hospital's rule (see
\cite[Lemma 2.2]{anderson}) the function
\begin{align*}
x\mapsto
\left(xm(x)\right)'&=m(x)-x-x\omega(x)m(x)=\frac{\overline{F}(x)-xf(x)/(1-x\omega(x))}{f(x)/(1-x\omega(x))}\\&=
\frac{\left(\overline{F}(x)-xf(x)/(1-x\omega(x))\right)-\lim\limits_{x\to\infty}\left(\overline{F}(x)-xf(x)/(1-x\omega(x))\right)}
{\left(f(x)/(1-x\omega(x))\right)-\lim\limits_{x\to\infty}\left(f(x)/(1-x\omega(x))\right)}
\end{align*}
is (strictly) increasing (decreasing) on $(0,\infty)$ if the
function
$$x\mapsto \frac{\left(\overline{F}(x)-xf(x)/(1-x\omega(x))\right)'}
{\left(f(x)/(1-x\omega(x))\right)'}=\frac{x^2\omega'(x)-x\omega(x)+2}{x\omega^2(x)-x\omega'(x)-2\omega(x)}$$
is (strictly) increasing (decreasing) on $(0,\infty).$ Notice that we used the fact that if $xf(x)/(1-x\omega(x))\to0$ as $x\to\infty,$ then
$f(x)/(1-x\omega(x))\to0$ as $x\to\infty.$
\end{proof}

We note here that the reciprocal concavity of the Mills ratio of the
standard normal distribution can be verified easily by using part
(a) or part (b) of Theorem \ref{th2}. More precisely, in the case of
the standard normal distribution we have $\omega(x)=-x,$
$\omega'(x)=-1.$ Consequently
$\varphi(x)/\omega(x)=-\varphi(x)/x\to0$ as $x\to\infty,$ the
function $x\mapsto \omega'(x)/\omega^2(x)=-1/x^2$ is strictly
increasing and
$$x\mapsto \frac{x^3\omega'(x)}{x\omega^2(x)-x\omega'(x)-2\omega(x)}=-\frac{x^2}{x^2+3}$$
is strictly decreasing on $(0,\infty).$ This is turn implies that by
using part (a) of Theorem \ref{th2} the Mills ratio of the standard
normal distribution is strictly reciprocally concave on
$(0,\infty).$

Similarly, since $\varphi(x)/(1+x^2)\to 0,$
$x\varphi(x)/(1+x^2)\to0,$ $-\varphi(x)/x\to0$ as $x\to\infty,$ the
function $x\mapsto \omega'(x)/\omega^2(x)=-1/x^2$ is strictly
increasing and
$$x\mapsto
\frac{x^2\omega'(x)-x\omega(x)+2}{x\omega^2(x)-x\omega'(x)-2\omega(x)}=\frac{2}{x^3+3x}$$
is strictly decreasing on $(0,\infty),$ part (b) of Theorem
\ref{th2} also implies that the Mills ratio of the standard normal
distribution is strictly reciprocally concave on $(0,\infty).$

Thus, Theorem \ref{th2} in fact generalizes some of the main results
of \cite{mills2}.

\section{\bf Reciprocal convexity (concavity) of Mills ratio of the gamma distribution}

The gamma distribution has support $(0,\infty),$ probability density
function, cumulative distribution function and survival function as
follows
$$f(x)=f(x;\alpha)=\frac{x^{\alpha-1}e^{-x}}{\Gamma(\alpha)},$$
$$F(x)=F(x;\alpha)=\frac{\gamma(\alpha,x)}{\Gamma(\alpha)}=\frac{1}{\Gamma(\alpha)}\int_0^xt^{\alpha-1}e^{-t}\dt$$
and
$$\overline{F}(x)=\overline{F}(x;\alpha)=\frac{\Gamma(\alpha,x)}{\Gamma(\alpha)}=\frac{1}{\Gamma(\alpha)}\int_x^{\infty}t^{\alpha-1}e^{-t}\dt,$$
where $\Gamma$ is the Euler gamma function, $\gamma(\cdot,\cdot)$
and $\Gamma(\cdot,\cdot)$ denote the lower and upper incomplete
gamma functions, and $\alpha>0$ is the shape parameter. As we can
see below, {\em the Mills ratio of the gamma distribution}
$m:(0,\infty)\to(0,\infty),$ defined by
$$m(x)=m(x;\alpha)=\frac{\Gamma(\alpha,x)}{x^{\alpha-1}e^{-x}},$$
{\em is reciprocally convex on $(0,\infty)$ for all $0<\alpha\leq1$
and reciprocally concave on $(0,\infty)$ for all
$1\leq\alpha\leq2.$} In \cite{mills} it was proved that if
$\alpha\geq1,$ then the Mills ratio $m$ is decreasing and
log-convex, and consequently convex on $(0,\infty).$ We note that
the convexity of Mills ratio of the gamma distribution actually can
be verified directly (see \cite{mart}), since
$$m(x)=\int_x^{\infty}\left(\frac{t}{x}\right)^{\alpha-1}e^{x-t}dt=\int_1^{\infty}xu^{\alpha-1}e^{(1-u)x}\du$$
and
\begin{align*}m'(x)&=\int_1^{\infty}\left((\alpha-1)u^{\alpha-2}\right)\left((1-u)e^{(1-u)x}\right)\du\\&
=\int_1^{\infty}u^{\alpha-1}e^{(1-u)x}du+\int_1^{\infty}xu^{\alpha-1}(1-u)e^{(1-u)x}\du,\end{align*}
where the last equality follows from integration by parts. From this
we clearly have that
$$m''(x)=\int_1^{\infty}(\alpha-1)(1-u)^2u^{\alpha-2}e^{(1-u)x}du$$
and consequently $m$ is convex on $(0,\infty)$ if $\alpha\geq1$ and
is concave on $(0,\infty)$ if $0<\alpha\leq1.$ The concavity of the
function $m$ can be verified also by using \cite[Theorem 2]{mills}.
Namely, if let $\omega(x)=f'(x)/f(x)=(\alpha-1)/x-1,$ then
$f(x)/\omega(x)$ tends to zero as $x$ tends to infinity and the
function $x\mapsto \omega'(x)/\omega^2(x)=(1-\alpha)/(\alpha-1-x)^2$
is decreasing on $(0,\infty)$ for all $0<\alpha\leq1.$ Consequently
in view of \cite[Theorem 2]{mills} $m$ is indeed concave on
$(0,\infty)$ for all $0<\alpha\leq1.$

Now let us focus on the reciprocal convexity (concavity) of the
Mills ratio of gamma distribution. Since
$$\frac{x^3\omega'(x)}{x\omega^2(x)-x\omega'(x)-2\omega(x)}=\frac{(1-\alpha)x^2}{(\alpha-1-x)^2+2x+1-\alpha},$$
we obtain that
$$\left(\frac{x^3\omega'(x)}{x\omega^2(x)-x\omega'(x)-2\omega(x)}\right)'=\frac{2(\alpha-1)(\alpha-2)(x^2+(1-\alpha)x)}{((\alpha-1-x)^2+2x+1-\alpha)^2}.$$
This last expression is clearly positive if $0<\alpha\leq1$ and
$x>0,$ and thus, by using part (a) of Theorem \ref{th2} we conclude
that Mills ratio $m$ is reciprocally convex on $(0,\infty)$ for all
$0<\alpha\leq1.$

Similarly, since
$$\frac{x^2\omega'(x)-x\omega(x)+2}{x\omega^2(x)-x\omega'(x)-2\omega(x)}=\frac{x^2+2(2-\alpha)x}{x^2+2(2-\alpha)x+(\alpha-1)(\alpha-2)},$$
we get
\begin{align*}\left(\frac{x^2\omega'(x)-x\omega(x)+2}{x\omega^2(x)-x\omega'(x)-2\omega(x)}\right)'=
\frac{2(\alpha-1)(\alpha-2)(x+2-\alpha)}{(x^2+2(2-\alpha)x+(\alpha-1)(\alpha-2))^2}\end{align*}
and this is negative if $1\leq \alpha\leq 2$ and $x>0.$
Consequently, by using part (b) of Theorem \ref{th2} we get that the
Mills ratio of the gamma distribution is indeed reciprocally concave
for $1\leq\alpha\leq2.$ Here we used that if $x$ tends to $\infty,$
then the expressions $f(x)/\omega(x)$ and
$xf(x)/(1-x\omega(x))$ tend to $0.$

Finally, we note that the convexity (concavity) of $x\mapsto m(1/x)$
can be verified also by using the integral representation of Mills
ratio of the gamma distribution. More precisely, if we rewrite
$m(x)$ as
$$m(x)=\int_0^{\infty}\left(1+\frac{u}{x}\right)^{\alpha-1}e^{-u}\du,$$
then
$$x^2m'(x)=-\int_0^{\infty}(\alpha-1)\left(1+\frac{u}{x}\right)^{\alpha-2}ue^{-u}\du$$
and
$$[x^2m'(x)]'=\int_0^{\infty}(\alpha-1)(\alpha-2)\left(1+\frac{u}{x}\right)^{\alpha-3}\frac{u^2}{x^2}e^{-u}\du.$$
This shows that $x\mapsto x^2m'(x)$ is decreasing on $(0,\infty)$ if
$1\leq\alpha\leq2$ and increasing on $(0,\infty)$ if $0<\alpha\leq1$
or $\alpha\geq2.$ Summarizing, the Mills ratio of the gamma
distribution is reciprocally convex on $(0,\infty)$ if
$0<\alpha\leq1$ and reciprocally concave on $(0,\infty)$ if
$1\leq\alpha\leq2.$ {\em When $\alpha>2$ the functions $x\mapsto
m(x)$ and $x\mapsto m(1/x)$ are convex on $(0,\infty),$ thus in this
case $m$ is nor reciprocally convex and neither reciprocally concave
on its support.}

\subsection*{Acknowledgments}

This work was supported by the J\'anos Bolyai Research Scholarship
of the Hungarian Academy of Sciences and by the Romanian National
Authority for Scientific Research CNCSIS-UEFISCSU, project number
PN-II-RU-PD\underline{ }388/2012. The author is grateful to the referee
for pointing out some errors in the manuscript and also to Dr. Ming
Hu from University of Toronto for bringing the works
\cite{berg,mart} to his attention and for giving him the motivation
of the study of the convexity of Mills ratio.

\end{document}